\newlength\circlesize
\newcommand{\longsquiggly}{\xymatrix{{}\ar@{~>}[r]&{}}}
\newcommand{\ShST}{\mathrm{ShST}}
\newcommand{\std}{\mathrm{std}}
\newcommand{\rectify}{\mathrm{rect}}
\newcommand{\height}{\mathrm{ht}}
\newcommand{\B}{\mathcal{B}}
\newcommand{\wt}{\mathrm{wt}}
\newtheorem{lemma}{Lemma}
\newtheorem{theorem}[lemma]{Theorem}
\theoremstyle{definition}
\newtheorem{example}[lemma]{Example}
\newtheorem{definition}[lemma]{Definition}
\newtheorem{remark}[lemma]{Remark}
\numberwithin{equation}{section}
\numberwithin{figure}{section}
\numberwithin{table}{section}
\numberwithin{lemma}{section}
\newcommand{\defn}[1]{{\bf #1}}
\newcommand{\east}[1]{\ensuremath{\xrightarrow{\ #1\ }}}
\newcommand{\west}[1]{\ensuremath{\xleftarrow{\ #1\ }}}
\newcommand{\north}[1]{\ensuremath{\big \uparrow \!\text{\raisebox{.1ex}{\scriptsize $#1$}}}}
\newcommand{\south}[1]{\ensuremath{\big \downarrow \!\text{\raisebox{.1ex}{\scriptsize $#1$}}}}
\newcommand{\stepnorth}[2]{\vector(0,1){.92}\put(-.23,.4){\scriptsize$#1$}\put(0,1){#2}}
\newcommand{\stepnorthA}[2]{\vector(0,1){.92}\put(.05,.4){\scriptsize$#1$}\put(0,1){#2}}
\newcommand{\stepnorthshiftW}[2]{\put(-.08,0){\vector(0,1){.95}\put(-.23,.4){\scriptsize$#1$}}\put(0,1){#2}}
\newcommand{\stepsouth}[2]{\vector(0,-1){.92}\put(.05,-.6){\scriptsize$#1$}\put(0,-1){#2}}
\newcommand{\stepsouthshiftE}[2]{\put(.08,0){\vector(0,-1){.92}\put(.05,-.6){\scriptsize$#1$}}\put(0,-1){#2}}
\newcommand{\stepsouthshiftW}[2]{\put(-.08,0){\vector(0,-1){.92}\put(-.23,-.6){\scriptsize$#1$}}\put(0,-1){#2}}
\newcommand{\stepeast}[2]{\vector(1,0){.92}\put(.4,-.25){\scriptsize$#1$}\put(1,0){#2}}
\newcommand{\stepeastA}[2]{\vector(1,0){.92}\put(.4,.05){\scriptsize$#1$}\put(1,0){#2}}
\newcommand{\stepwest}[2]{\vector(-1,0){.92}\put(-.5,.05){\scriptsize$#1$}\put(-1,0){#2}}
\newcommand{\stepwestshiftN}[2]{\put(0,.08){\vector(-1,0){.92}\put(-.5,.05){\scriptsize$#1$}}\put(-1,0){#2}}
\newcommand{\stepwestshiftS}[2]{\put(0,-.08){\vector(-1,0){.92}\put(-.5,-.25){\scriptsize$#1$}}\put(-1,0){#2}}
\title{Shifted tableau crystals} 
\author{
Maria Gillespie \thanks{\href{mailto:mgillespie@math.ucdavis.edu}{mgillespie@math.ucdavis.edu}. Supported by the NSF MSPRF grant PDRF 1604262.}\addressmark{1},
Jake Levinson \thanks{\href{mailto:jlev@uw.edu}{jlev@uw.edu}. Supported by a Rackham Predoctoral Fellowship and by NSERC grant PDF-502633.}\addressmark{2},
\and
Kevin Purbhoo \thanks{\href{mailto:kpurbhoo@uwaterloo.ca}{kpurbhoo@uwaterloo.ca} Supported by NSERC grant RGPIN-355462.}\addressmark{3}
}
\address{
\addressmark{1}Mathematics Department, University of California, Davis, CA \\
\addressmark{2}Mathematics Department,
University of Washington, Seattle, WA \\
\addressmark{3}Combinatorics and Optimization Department, University of Waterloo, ON}
\abstract{
We introduce coplactic raising and lowering operators $E'_i$, $F'_i$, $E_i$, and $F_i$ on shifted skew semistandard tableaux. We show that the primed operators and unprimed operators each independently form type A Kashiwara crystals (but not Stembridge crystals) on the same underlying set and with the same weight functions. When taken together, the result is a new kind of `doubled crystal' structure that recovers the combinatorics of type B Schubert calculus: the highest-weight elements of our crystals are precisely the shifted Littlewood-Richardson tableaux, and their generating functions are the (skew) Schur $Q$-functions. We give local axioms for these crystals, which closely resemble the Stembridge axioms for type A. Finally, we give a new criterion for such tableaux to be ballot.
}
\keywords{Schubert calculus, shifted tableaux, jeu de taquin, crystal base theory}
\begin{document}

\maketitle


\section{Introduction}


  Crystal bases were first introduced by Kashiwara \cite{Kashiwara} in the context of the representation theory of the quantized universal enveloping algebra $U_q(\mathfrak{g})$ of a Lie algebra $\mathfrak{g}$ at $q=0$.  Since then, their connections to tableau combinatorics, symmetric function theory, and other parts of representation theory have made crystal operators and crystal bases the subject of much recent study.  (See \cite{Schilling} for an excellent recent overview of crystal bases.)
  
  In type A, the theory is well-understood in terms of semistandard Young tableaux. There are combinatorial operators $E_i$ and $F_i$ that respectively raise and lower the weight of a tableau by changing an $i+1$ to an $i$ or vice versa.  They are defined in terms of the reading word $w$: one replaces each $i$ in $w$ with a right parenthesis and each $i+1$ with a left parenthesis. Then $E_i(T)$ is formed by maximally pairing parentheses, then changing the first unpaired $i+1$ to $i$. For $F_i(T)$, we instead change the last unpaired $i$ to $i+1$. With $i=1$ and $w=112212112$, this gives:
  \[\varnothing\ \xleftarrow{\ E\ }\ \ ))\underline{(()())})\ \ \xleftarrow{\ E\ }\ w = ))\underline{(()())}(\ \ \xrightarrow{\ F\ }\ \ )(\underline{(()())}(\ \ \xrightarrow{\ F\ }\ \ ((\underline{(()())}(\ \ \xrightarrow{\ F\ }\ \varnothing
  \]
 For tableaux of straight shape and entries in $\{1,2\}$, the action simplifies to the following natural chain structure:
\begin{center}
    \includegraphics{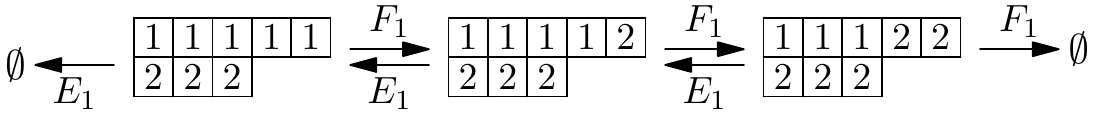}
  \end{center}
  A central property of the $E_i$ and $F_i$ operations is that they are \textbf{coplactic}, that is, they commute with all sequences of jeu de taquin slides. Thus, if we perform the same outwards slide on the tableaux above, the crystal operators must act in the same way:
  \begin{center}
   \includegraphics{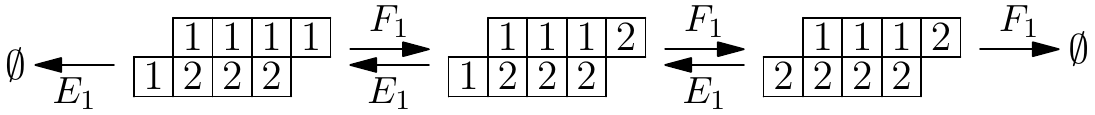}
  \end{center}
In this sense the operations $E_i$ and $F_i$ are the \textit{unique} coplactic operators that give the natural connected chain structure on rectified shapes containing only $i$, $i+1$. Notably, Littlewood-Richardson skew tableaux 
are precisely those for which $E_i(T) = \varnothing$ for all $i$. This gives a proof of the Littlewood-Richardson rule.


\subsection{Shifted tableaux; results of this paper}
Despite the elegance of the crystal operators on ordinary semistandard tableaux, a similar structure on \textbf{shifted tableaux} has proven elusive. In \cite{GJKKK}, Grantcharov, Jung, Kang, Kashiwara, and Kim use Serrano's \emph{semistandard decomposition tableaux} \cite{Serrano} to study the quantum queer superalgebras $\mathfrak{q}_n$ because, ``unfortunately, the set of shifted semistandard Young tableaux of fixed shape does not have a natural crystal structure.'' In this paper, we give a potential resolution: a coplactic crystal-like structure on shifted tableaux.

For straight shifted tableaux on the alphabet $\{1',1,2',2\}$, there is a natural organization by weight of the tableaux of a given shape (Figure \ref{fig:two-row}), similar to the chains for ordinary tableaux. 
Haiman's theory of shifted dual equivalence (see \cite{Haiman}) implies that these operators uniquely extend to coplactic operators on all shifted skew tableaux. However, a direct description that does not rely on shifted jeu de taquin -- like the pairing-parentheses description of $E$ and $F$ on ordinary tableaux -- is far from obvious. 

\begin{figure}[b!]
\begin{center}
 \includegraphics[width=.8\linewidth]{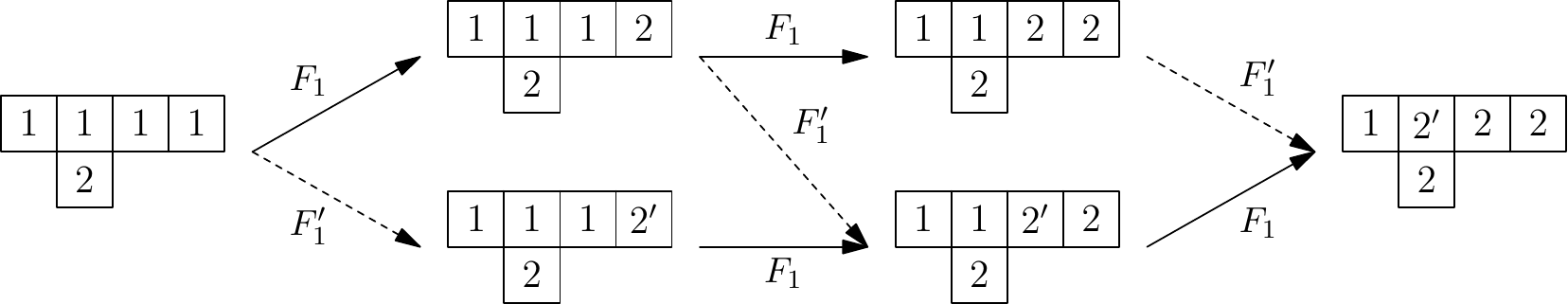} \vspace{0.2cm}
 
 \includegraphics[width=.75\linewidth]{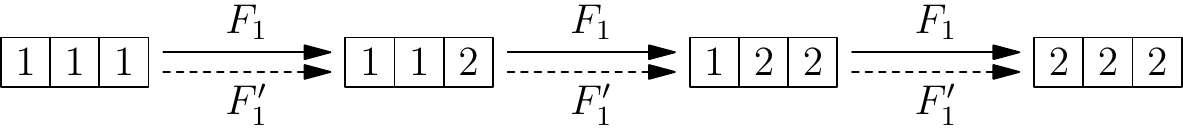}
\end{center}
 \caption{\label{fig:two-row}The crystals of the form $\ShST(\lambda,2)$ are `doubled strings'. {\bf Above}: the tableaux of shapes $(4,1)$ and $(3)$. Wherever an arrow is missing, the corresponding operator is not defined. Reversing the arrows gives the partial inverses $E_1$ and $E_1'$.} 
\end{figure}

There are three main results in this paper. First, we give a direct description of the coplactic operators $E_i,E_i',F_i,F_i'$; see Definitions \ref{def:primed-operators} and \ref{def:F} and Theorem $\ref{thm:main}$. Second, we show that our crystals are uniquely determined by their local structure, and we describe that structure explicitly (Theorem \ref{thm:uniqueness-main}). Third, we obtain a new and simpler criterion for a shifted tableau to be ballot (Theorem \ref{thm:lattice-walk-main}). Two crystals are shown in Figure \ref{fig:crystal}.


\subsection{Applications and future work}

Our primary application (in a forthcoming paper) 
is to understand the topology of so-called {\bf real Schubert curves} $S$ in the odd orthogonal Grassmannian $\mathrm{OG}(n,2n+1)$. (See \cite{GillespieLevinson} for prior work on Schubert curves in $\mathrm{Gr}(k,n)$.) Geometric considerations show that $S$ is described by the operators $F_i$ and shifted jeu de taquin.
%
%
A significant question is whether our crystals also form canonical bases for the representations of some quantized enveloping algebra. 
%
Our local axioms also give a crystal-theoretic way to show that a generating function is Schur-$Q$-positive: one constructs operators on the underlying set, satisfying the local axioms. Schur-$Q$-positivity then follows from Theorem \ref{thm:uniqueness-main}. This method in type A has been used \cite{Morse-Schilling} for certain Stanley symmetric functions.



\begin{remark}
The results of this paper are in \cite{GLP2017}, except for the local axioms of Theorem \ref{thm:uniqueness-main}, which are forthcoming in \cite{GL2018}.
\end{remark}

\subsection{Acknowledgments}

We thank Richard Green, Anne Schilling, David Speyer, John Stembridge and Mark Haiman for helpful conversations. 

\section{Background: words and shifted tableaux}\label{sec:notation}


Let $w$ be a string in symbols $\{1',1,2',2,3',3,\ldots\}$. Informally, we treat the first $i$ or $i'$ in $w$ as both $i$ and $i'$. A {\bf word} is, thus, an equivalence class of strings. The {\bf canonical form} is the string whose first $i$ or $i'$ (for all $i$) is an $i$.
The {\bf weight} of $w$ is $\mathrm{wt}(w) = (n_1, n_2, \ldots ),$ where $n_i$ is the total number of $(i)$s and $(i')$s in $w$.



  We use the conventions of \cite{Sagan, Worley} for shifted tableaux. A \textbf{strict partition} is a strictly-decreasing sequence of positive integers, $\lambda=(\lambda_1 > \ldots > \lambda_k)$. 
  Its \textbf{(shifted) Young diagram} has $\lambda_i$ boxes in the $i$-th row, shifted $i$ steps to the right. A \textbf{(shifted) skew shape}  $\lambda/\mu$ is formed by removing the boxes of $\mu$ from $\lambda$. A \textbf{(shifted) semistandard tableau} $T$ is a filling of $\lambda/\mu$ with letters $\{1'{<}1{<}2'{<}2{<}\cdots \}$, with weakly increasing rows and columns, where primed entries repeat only in columns, and unprimed only in rows.  The \textbf{reading word} $w$ of $T$ is the concatenation of the rows from bottom to top.
The {\bf weight} of $T$ is the weight of its reading word. We write $\B = \ShST(\lambda/\mu,m)$ for the set of shifted semistandard tableaux of shape $\lambda/\mu$ and entries $\leq m$.

  \begin{center}
  \raisebox{-.5\height}{\includegraphics{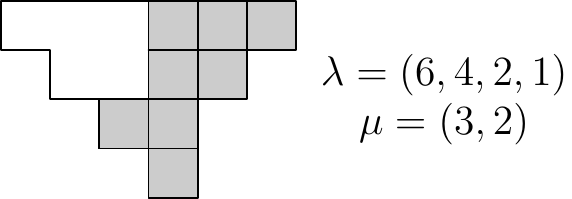}}
\hspace{1cm}
\raisebox{-.5\height}{\includegraphics{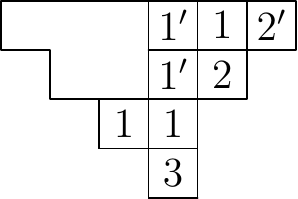}}
\hspace{0.2cm}
 (word: $w = 3111'21'12'$)
  \end{center}

We say that $T$ is in \textbf{canonical form} if $w$ is, and use the same conventions for tableaux as for words: 
two tableaux are \textbf{equivalent} if they have the same shape and their reading words are equivalent; the set of all representatives of $T$ is the \textbf{equivalence class} of $T$.

The notion of \textbf{jeu de taquin} for shifted tableaux is similar to that for usual tableaux: sliding squares while preserving the semistandardness conditions. See \cite{Worley} for details.
%
%
We write $\rectify(T)$ or $\rectify(w)$ for the jeu de taquin \textbf{rectification} of any shifted semistandard tableau $T$ with reading word $w$; this is well-defined by \cite{Sagan, Worley}. We say a tableau $T$ in canonical form is {\bf Littlewood-Richardson}, and $w$ is {\bf ballot}, if for each $i$ the $i$-th row of $\rectify(T)$ contains only $(i)$s. An operation on shifted tableaux (or on their reading words) is \textbf{coplactic} if it commutes with all shifted jeu de taquin slides.



Shifted semistandard tableaux are used to define the Schur $Q$-functions. Equivalence classes of tableaux, on the other hand, arise in formulas involving multiplication of Schur $P$-functions, whose structure coefficients are enumerated by the Littlewood-Richardson shifted tableaux.
Our motivation is connected to the latter, from the geometry of $OG(n,2n+1)$, so we work with equivalence classes of tableaux (or equivalently, tableaux in canonical form). 

  The \textbf{standardization} $\mathrm{std}(w)$ of a word $w$ is formed by replacing its letters by $1,2,\ldots,n$, from least to greatest, breaking ties by reading order for unprimed letters and reverse reading order for primed letters. For a tableau $T$, we form $\std(T)$ by standardizing its reading word; note that $T$ is semistandard if and only if $\mathrm{std}(T)$ is standard.

\section{The operators $E', F'$ and $E,F$}\label{sec:primed-ops}

We now restrict to the alphabet $\{1',1,2',2\}$, and define only $E'_1, F'_1$ and $E_1, F_1$ and write $E',F',E,F$.  In general, the index-$i$ operators $E'_i, F'_i, E_i, F_i$ 
act on the subword of letters $\{i',i,i{+}1',i{+}1\}$, treating $i$ as $1$ and $i{+}1$ as $2$. For tableaux, the operators will act on the reading word.

\subsection{Lattice walks, critical strings and ballotness}

The key construction for our operators is to associate, to a word $w$ in the alphabet $\{1',1,2',2\}$, a first-quadrant {\bf lattice walk}. The walk begins at the origin and converts each letter of $w$ to a unit step in a cardinal direction. See Fig. \ref{fig:lattice-walk-intro}. We think of this as a kind of `bracketing rule', because arrows (away from the axes) `cancel' in opposite pairs. 

\begin{figure}
\begin{center}
\begin{tabular}{cc}
\setlength{\unitlength}{3em}
\ \ \begin{picture}(5,3)(0,0)
\multiput(0,0)(0,0.2){16}{\line(0,1){0.1}}
\multiput(0,0)(0.2,0){17}{\line(1,0){0.1}}
\put(0,1.2){\circle*{0.1}}
\put(0,1.2){\vector(0,1){.7}\vector(1,0){.7}}
\put(-0.265,1.95){\small{$2$,$2'$}}
\put(0.4,1.3){\small{$1$,$1'$}}%
\put(1.4,0){\circle*{0.1}}
\put(1.4,0){\vector(0,1){.7}\vector(1,0){.7}}
\put(1.5,0.6){\small{2,2'}}
\put(2.0,0.1){\small{1,1'}}%
\put(2.8,2.2){\circle*{0.1}}
\put(2.8,2.2){\vector(0,1){.7}\vector(1,0){.7}}
\put(2.8,2.2){\vector(0,-1){.7}}
\put(2.8,2.2){\vector(-1,0){.7}}
\put(2.7,2.95){\small{$2$}}
\put(3.5,2.1){\small{$1'$}}
\put(2.7,1.2){\small{$1$}}
\put(1.85,2.1){\small{$2'$}}
\end{picture}
&
\ \
\begin{picture}(3,3)(0,0)
\setlength{\unitlength}{3em}
\stepnorth{2}{%
\stepeast{1}{%
\stepeast{1'}{%
\stepsouthshiftW{1}{%
\stepnorthA{2'}{%
\stepnorthA{2}{%
\stepwestshiftS{2'}{%
\stepeastA{1'}{%
\stepeastA{1'}{%
}}}}}}}}}
\put(0,0){\circle*{0.15}}
\put(3,2){\circle{0.15}}
\multiput(0,0)(0,0.2){15}{\line(0,1){0.1}}
\multiput(0,-0.02)(0.2,0){15}{\line(1,0){0.1}}
\end{picture}
\end{tabular}
\end{center}
\caption{The {\bf lattice walk} of a word $w \in \{1', 1, 2', 2\}^n$. In the interior of the first quadrant, each letter corresponds to a cardinal direction. Along the axes, primed and unprimed letters behave the same way. \textbf{Right:} The walk for $w=211'12'22'1'1'$ ends at the point $(3,2)$.\label{fig:lattice-walk-intro}} \end{figure}
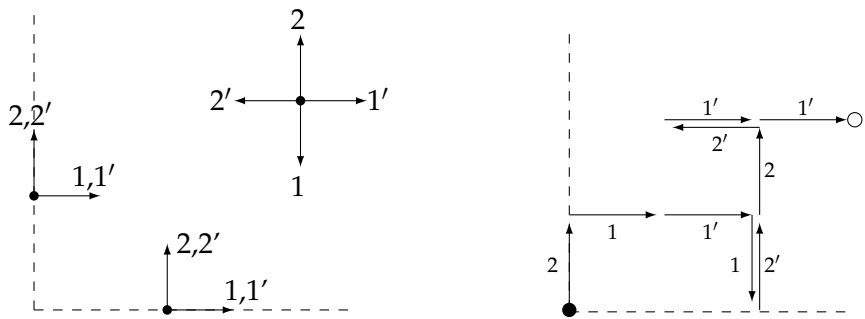


\begin{theorem}
The walk's length $n$ and 
endpoint $(x_n,y_n)$ 
determine the shape 
of $\lambda = \rectify(w)$:
\begin{align*}
\lambda_1 &= \tfrac{1}{2}(n+x_n+y_n) = \#\big\{\ \north{} \text{ and/or } \east{} \text{ steps in the walk\ }\big\}, \\
\lambda_2 &= \tfrac{1}{2}(n-x_n-y_n) = \#\big\{\west{} \text{ and/or } \south{} \text{ steps in the walk\ }\big\}.
\end{align*}
\end{theorem}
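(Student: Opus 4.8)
The plan is to treat the two equalities in each line separately. The right-hand (counting) identities are pure bookkeeping. Writing $N,E,S,W$ for the numbers of north, east, south, and west steps actually taken, each step has exactly one type, so $n=N+E+S+W$, while the endpoint records the net displacement $x_n=E-W$ and $y_n=N-S$. (The reflecting rule along the axes only constrains the walk to the quadrant; it does not affect these net counts, since we count the directions actually travelled.) Substituting gives $\tfrac12(n+x_n+y_n)=N+E$ and $\tfrac12(n-x_n-y_n)=S+W$ at once. Since $w$ uses only $\{1',1,2',2\}$, its rectification has entries $\le 2$ and hence at most two rows, and $\lambda_1+\lambda_2=n$ because rectification preserves the number of boxes. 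Thus the whole content reduces to the single identity $\lambda_1-\lambda_2=x_n+y_n$, from which both displayed formulas follow by combining with $\lambda_1+\lambda_2=n$.

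To prove $\lambda_1-\lambda_2=x_n+y_n$ I would show that the anti-diagonal displacement $x_n+y_n$ is a jeu de taquin invariant, so that it may be evaluated on $\rectify(w)$ itself. (Note that the individual coordinates are not invariant: two straight-shape tableaux of the same shape but different content already give different endpoints, e.g.\ the reading words $211$ and $212'$, both of shape $(2,1)$, end at $(1,0)$ and $(0,1)$; only $x_n+y_n$ agrees.) A full shifted slide decomposes into elementary moves of the empty cell. A horizontal move leaves the reading word, and hence the walk, unchanged, so only the vertical moves---where a single entry passes between two adjacent rows---together with the special diagonal moves of shifted jeu de taquin need to be examined. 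For each such move I would verify that the recomputed walk has the same value of $x_n+y_n$.

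The base case is the straight two-row shape. There I would compute the walk of the reading word directly and check that the number of negative (south or west) steps equals $\lambda_2$; since $x_n+y_n=(N+E)-(S+W)=n-2(S+W)$ and $n=\lambda_1+\lambda_2$, this yields $x_n+y_n=\lambda_1-\lambda_2$. This can be organized as an induction on $\lambda_2$ by peeling off an outer corner, the point being that in a rectified word each of the $\lambda_2$ second-row letters is matched by exactly one letter that is traversed in the interior as a south or west step, while all remaining letters are traversed as positive steps.

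I expect the invariance step to be the main obstacle. The difficulty is that the direction assigned to a letter $1$ or $2'$ depends on whether the walk is currently on an axis or in the interior, which is a global feature; a slide permutes the reading word and can change the on-axis/interior status of many letters simultaneously, so invariance is not visible term by term. The mechanism that saves it is precisely the reflecting boundary: conceptually, $x_n+y_n$ counts the unmatched letters of a two-dimensional reflected ``bracketing rule,'' the shifted analogue of the classical parenthesis matching that computes $\lambda_1-\lambda_2$ for words in $\{1,2\}$, and this count survives because the cancellation of opposite steps in the interior, combined with the reflection at the axes, is compatible with the elementary moves. Making this precise is exactly the case analysis indicated above. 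An alternative that avoids the slide bookkeeping would be to establish a shifted Greene--Kleitman description of $\lambda_1-\lambda_2$ and match it to the reflected walk, but the invariance route keeps everything inside the jeu de taquin framework already in place.
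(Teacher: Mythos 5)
Your reduction to the single identity $\lambda_1-\lambda_2=x_n+y_n$ is fine (the counting identities are indeed pure bookkeeping, and $\lambda_1+\lambda_2=n$ since rectification preserves boxes), and your overall strategy --- prove a coplactic invariance statement, then evaluate on the straight-shape case --- is the same as the paper's. But the proof has a genuine gap exactly where you flag it: the invariance step is never carried out, only described as a case analysis over elementary jeu de taquin cell moves whose difficulty you yourself identify (a single vertical slide reorders many letters of the reading word and can change the on-axis/interior status of letters far from the moved cell, so the effect on the walk is not local). The missing idea that makes this tractable is to work with \emph{shifted Knuth moves} on words rather than with cell moves on tableaux: by Sagan--Worley, two words have the same rectification if and only if they are connected by shifted Knuth moves, and each such move transposes only two or three adjacent letters. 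One then checks, by a finite case analysis on the position $(x,y)$ of the walk just before the affected letters, that the endpoint of the walk is unchanged; the tail of the walk after the move is rigidly translated, so nothing global needs to be tracked. This is the paper's proof, and without some such localization your plan does not close.

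A secondary error: your parenthetical claim that ``the individual coordinates are not invariant'' is not supported by your example. The words $211$ and $212'$ have different weights ($(2,1)$ versus $(1,2)$), hence are \emph{not} jeu de taquin equivalent --- they are distinct tableaux in $\ShST((2,1),2)$ that merely share a shape. In fact the full endpoint $(x_n,y_n)$ \emph{is} a coplactic invariant (this is what the paper proves, and it is needed later, e.g.\ for the ballotness criterion of Theorem \ref{thm:lattice-walk-main} and for the well-definedness of $(\varphi_i,\varepsilon_i)$ in Definition \ref{def:crystal}). Restricting attention to $x_n+y_n$ does suffice for the present theorem, but it buys you nothing: the Knuth-move case check for the full endpoint is no harder, and deliberately weakening the invariant based on a comparison of inequivalent words suggests a misreading of what jeu de taquin preserves.
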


\begin{proof} We use shifted Knuth equivalence \cite{Sagan,Worley}, showing that the endpoint of the walk is unchanged after applying a shifted Knuth move to $w$. 
\end{proof}

As a corollary, we obtain a new criterion for ballotness, which differs from existing characterizations (see \cite{Stembridge}) in that it only requires reading through $w$ once, rather than twice (backwards-and-forwards).
\begin{theorem} \label{thm:lattice-walk-main}
Let $w$ be a word in the alphabet $\{1',1, \ldots, n',n\}$. Then $w$ is ballot if and only if each of its lattice walks (for $i=1, \ldots, n-1$) ends on the $x$-axis.
\end{theorem}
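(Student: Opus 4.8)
The plan is to reduce the general statement to the two-letter case and then read off the answer from the preceding theorem together with one elementary invariant of the walk.

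First I would establish the reduction to two letters: $w$ is ballot if and only if, for every $i$, its restriction $w^{(i)}$ to the sub-alphabet $\{i', i, i{+}1', i{+}1\}$ (relabelled to $\{1',1,2',2\}$) is ballot. The forward direction uses that rectification commutes with restriction to a contiguous block of letters, so that $\rectify(w^{(i)})$ is the rectification of rows $i$ and $i{+}1$ of the superstandard tableau $\rectify(w)$, which is again superstandard. The reverse direction is the local characterization of highest weight: if each consecutive two-letter restriction rectifies to a superstandard tableau, then $\rectify(w)$ must itself be superstandard, i.e.\ its $j$-th row consists only of $(j)$s. I would invoke the shifted jeu de taquin and dual equivalence theory (Worley, Stembridge) for these two facts. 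This reduction is the step I expect to require the most care, since the ``local implies global'' direction is exactly the content that makes ballotness a pairwise condition.

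It then remains to prove the two-letter statement: a word $v \in \{1',1,2',2\}^m$ is ballot if and only if its lattice walk ends on the $x$-axis. Write $(x_m, y_m)$ for the endpoint and $\mu = \rectify(v)$, a shape with at most two rows (a two-letter filling has no strictly decreasing subword of length three, hence at most two rows, so the preceding theorem describes the whole shape). That theorem gives $\mu_1 - \mu_2 = x_m + y_m$. The key observation is a second linear relation obtained by tracking the quantity $x - y$ along the walk: every letter $1$ or $1'$ increases it by $1$, since its step is $\east{}$ or $\south{}$ in every position, and every letter $2$ or $2'$ decreases it by $1$, since its step is $\north{}$ or $\west{}$ in every position — the value of the increment does not depend on whether the current point is interior or on an axis. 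Hence $x_m - y_m = n_1 - n_2$, where $(n_1, n_2) = \weight(v)$.

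Combining the two relations yields $y_m = \tfrac12\big[(\mu_1 - \mu_2) - (n_1 - n_2)\big]$. Since rectification preserves weight, $\rectify(v)$ has weight $(n_1, n_2)$; and because a straight-shape shifted tableau is superstandard exactly when its weight equals its shape, $v$ is ballot if and only if $(n_1, n_2) = (\mu_1, \mu_2)$, equivalently $\mu_1 - \mu_2 = n_1 - n_2$ (using $\mu_1 + \mu_2 = n_1 + n_2 = m$). By the displayed formula this says precisely $y_m = 0$, i.e.\ the walk ends on the $x$-axis. I expect this half to be routine once the invariant $x_m - y_m = n_1 - n_2$ is in hand; the only points to double-check are the two-row bound above and the fact that weight-equals-shape forces the superstandard filling even in the presence of primes.
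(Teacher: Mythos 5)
Your proof is correct and follows the paper's intended route: the paper presents this result as a corollary of the shape-from-endpoint theorem, and your derivation --- reducing to the consecutive two-letter restrictions $w^{(i)}$, combining $\mu_1 - \mu_2 = x_m + y_m$ with the conserved quantity $x_m - y_m = n_1 - n_2$ (valid since $1,1'$ always step E or S and $2,2'$ always step N or W), and using that weight equal to shape forces the superstandard filling --- is exactly the computation that makes the corollary precise. The only step carrying content beyond the shape theorem is the pairwise reduction (ballot if and only if each restriction $w^{(i)}$ is ballot), and in particular its ``local implies global'' direction; you are right to flag it as the delicate point, and citing the known pairwise-local lattice-word characterizations (Stembridge, Worley) for it is consistent with what the paper itself does.
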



\subsection{Definitions of $E', F'$ and $E,F$}

\begin{lemma} \label{lem:unstandardize}
Let $s$ be a standard word (i.e. a permutation of $1, \ldots, n$), and let $n = \sum_{i=1}^k a_i$. There is at most one word $w$ of weight $(a_1,\ldots,a_k)$ and standardization $\mathrm{std}(w) = s$.
\end{lemma}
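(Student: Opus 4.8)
The plan is to prove uniqueness by showing that, given the standardization $s$ and the target weight $(a_1, \ldots, a_k)$, the unprimed/primed status of each letter of $w$ is forced. First I would recall the definition of standardization: the letters of $w$ are relabeled $1, 2, \ldots, n$ from least to greatest, and crucially, ties among equal letters are broken \emph{by reading order for unprimed letters} and \emph{by reverse reading order for primed letters}. Since the weight $(a_1, \ldots, a_k)$ tells us exactly how many letters of $w$ equal each value $i$ (counting both $i$ and $i'$), the values $1, \ldots, n$ of $s$ partition into consecutive blocks: the positions assigned standardized values in the range $\big(\sum_{j<i} a_j,\ \sum_{j \le i} a_j\big]$ are precisely the positions of $w$ whose letter is $i$ or $i'$. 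Thus the \emph{value} of each letter of $w$ (i.e. $i$ versus $i{+}1$, ignoring primes) is determined by $s$ and the weight alone.

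It remains to show the primes are forced. I would fix a single value $i$ and consider the block $B$ of positions carrying that value, together with the standardized numbers assigned to them. Within this block, the tie-breaking rule says: if a position holds an unprimed $i$, its standardized value reflects reading-order ranking among the value-$i$ letters; if it holds a primed $i'$, it reflects \emph{reverse} reading-order ranking. The key observation is that within the block, all the primed letters must receive \emph{smaller} standardized values than all the unprimed letters, because $i' < i$ in the alphabet order. Combined with the fact that primes are ranked in reverse reading order and unprimes in forward reading order, this means that reading the positions of $B$ in the order dictated by their standardized values (increasing) traces out first the primed positions in reverse reading order, then the unprimed positions in forward reading order. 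Inverting this: if we know the standardized values on $B$ and know how many of them are primed (which the weight does \emph{not} directly tell us, so this is the crux), the assignment is determined.

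The main obstacle, and the heart of the lemma, is precisely that the number of primed letters in each block is \emph{not} given by the weight, so I must argue it is nonetheless recoverable from $s$ alone. Here I would use the interaction between adjacent blocks, or more cleanly, a direct combinatorial characterization: a position in block $B$ is primed if and only if its standardized value, relative to the reading-order pattern of $B$, is consistent with the reverse-order convention rather than the forward-order one. Concretely, I expect to show that the split point between primed and unprimed letters in $B$ is forced because the standardized labels of $B$, read off in \emph{position} (reading) order, must first strictly decrease (the reverse-ordered primes) and then strictly increase (the forward-ordered unprimes); there is at most one way to write a given sequence of distinct integers as a decreasing run followed by an increasing run sharing the interface, so the prefix that is decreasing is determined, pinning down which letters are primed. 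Since this argument applies independently to each value $i$, the entire word $w$ is reconstructed, proving at most one such $w$ exists.

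\begin{proof}
We show that $s = \std(w)$ and the weight $(a_1, \ldots, a_k)$ together determine $w$. Set $p_i = \sum_{j \le i} a_j$, so $p_0 = 0$ and $p_k = n$. By definition of standardization, the letters of $w$ equal to $i$ or $i'$ are exactly those receiving standardized values in the interval $(p_{i-1}, p_i]$; hence the weight and $s$ determine, for each position, whether its letter has value $i$ (as opposed to some other value), independent of priming. Fix $i$ and let $B$ be the set of positions of $w$ holding $i$ or $i'$, so the restriction of $s$ to $B$ is a bijection onto $(p_{i-1}, p_i]$. Since $i' < i$, every primed letter in $B$ is standardized below every unprimed letter in $B$; moreover the primed letters are ranked in reverse reading order and the unprimed ones in forward reading order. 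Consequently, listing the positions of $B$ in reading order, the associated standardized values form a strictly decreasing run (the primes) followed by a strictly increasing run (the unprimes). A sequence of distinct integers admits at most one decomposition into a decreasing run followed by an increasing run, so the partition of $B$ into primed and unprimed positions is determined by $s$. As this holds for every $i$, the word $w$ is uniquely determined.
\end{proof}
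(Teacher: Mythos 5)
The paper itself omits the proof of this lemma (it is an extended abstract), so there is nothing to compare against; judged on its own merits, your argument has a genuine gap. The first half is fine: the weight together with $s$ determines, for each position, the \emph{value} of its letter, so only the primes need to be recovered. But the structural claim you then rely on --- that listing the positions of the block $B$ in reading order, the standardized values form a strictly decreasing run (the primes) followed by a strictly increasing run (the unprimes) --- is false. It implicitly assumes that within a block all primed letters precede all unprimed letters in reading order, which need not hold: in the paper's own running example $w=3111'21'12'$, the value-$1$ letters carry primes in the pattern (unprimed, unprimed, primed, primed, unprimed), and their standardized values in reading order are $3,4,2,1,5$, which is not decreasing-then-increasing. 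The object that \emph{is} unimodal is the inverse list: the positions of $B$ written in increasing order of standardized value first run through the primed positions in decreasing reading order, then the unprimed positions in increasing reading order, so it is that sequence which is a decreasing run followed by an increasing run, with the primed count $p$ equal to the length of the decreasing prefix.

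Even after this correction a second gap remains: your assertion that a sequence of distinct integers admits at most one such decomposition is also false (e.g.\ $2,1,3$ admits the splits $(2)\,|\,(1,3)$ and $(2,1)\,|\,(3)$), and this ambiguity is real at the level of strings --- the strings $11$ and $1'1$ have the same weight and the same standardization $12$. The lemma is saved only because a ``word'' in this paper is an equivalence class of strings in which the priming of the \emph{first} occurrence of each value is immaterial: one checks that when two decompositions exist they differ exactly in whether the first position of the block is counted as primed, so they yield the same word. Your proof never invokes this convention, and without it the statement you are actually arguing (uniqueness of the \emph{string}) is false. To repair the proof you need both fixes: work with the position-in-value-order sequence rather than the value-in-position-order sequence, and show that the only possible non-uniqueness in the decreasing/increasing split concerns the minimal element, which must sit at the first position of the block and is therefore absorbed by the equivalence of strings defining a word.
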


\noindent This lemma justifies the following definition. Let $\alpha$ be the vector $(1,-1)$.

\begin{definition}[Primed operators] \label{def:primed-operators}
We define $E'(w)$ to be the unique word such that
\[\mathrm{std}(E'(w)) = \mathrm{std}(w) \hspace{0.5cm}\text{ and }\hspace{0.5cm} \mathrm{wt}(E'(w)) = \mathrm{wt}(w) + \alpha,\]
if such a word exists; otherwise, $E'(w) = \varnothing$. We define $F'(w)$ analogously using $-\alpha$.
\end{definition}



%
%

\begin{example} Here are some maximal chains for $F'$:
\begin{gather*}
12211' \xrightarrow{F'} 1222'1' \xrightarrow{F'} \varnothing
\\
1111'1' 
\xrightarrow{F'} 1121'1' 
\xrightarrow{F'} 1221'1' 
\xrightarrow{F'} 22211' 
\xrightarrow{F'} 2222'1
\xrightarrow{F'} 2222'2'
\xrightarrow{F'} \varnothing
\end{gather*}
\end{example}







For the unprimed operators, let $w$ be a word and $u = w_k \dots w_l$ a substring of some representative of $w$. Let $(x,y)$ be the location of the lattice walk of $w$ just before $w_k$.

\begin{definition} We say that $u$ is an \defn{$F$-critical substring} if certain conditions on $u$ and its location are met. There are five types of $F$-critical substring. Each row of the table in Figure \ref{fig:criticals} describes one type, and a transformation that can be performed on that type.
\end{definition} 

\begin{remark} Critical substrings can only occur when the walk has $y=0$, $y=1$, $x=0$, or $x=1$, i.e. when the prefix $w_1 \dots w_{k-1}$ is ballot, anti-ballot, or close to one of these.
\end{remark}

\begin{figure}
\begin{center}
\begin{tabular}{|c|c|c|c|c|}
\hline
\multirow{2}{*}{Type} 
& \multicolumn{3}{c|}{Conditions} & \multirow{2}{*}{Transformation}  \\
     & \multicolumn{1}{c}{Substring}&  \multicolumn{1}{c}{Steps} & Location &  
\\\hline
\hline
 \multirow{2}{*}{1F} & 
\multirow{2}{*}{$u = 1(1')^*2'$} &
\east{1} ~ \east{1'} ~ \north{2'} & 
$y=0$ & 
\multirow{2}{*}{$u \to 2'(1')^*2$} \\[.5ex]\cline{3-4}
& &  
\south{1} ~ \east{1'} ~ \north{2'} & 
$y=1$, $x \geq 1$ & 
\\[.5ex]\hline
\multirow{2}{*}{2F} &
\multirow{2}{*}{$u = 1(2)^*1'$} &
\east{1} ~ \north{2} ~ \east{1'} & 
$x = 0$ & 
\multirow{2}{*}{$u \to 2'(2)^*1$}  \\[.5ex]\cline{3-4}
&& 
\south{1} ~ \north{2} ~ \east{1'} &
$x = 1$, $y \geq 1$ & 
\\[.5ex]\hline
 3F & $u = 1$ & 
\east{1} & 
$y = 0$ & 
$u \to 2$ 
\\\hline
 4F & 
$u = 1'$ & 
\east{1'} & 
$x  = 0$ & 
$u \to 2'$ 
\\\hline
\multirow{2}{*}{5F} & $u = 1$ 
& \south{1}
& \multirow{2}{*}{$x=1$, $y \geq 1$} 
&
\multirow{2}{*}{undefined} \\[.5ex]\cline{2-3}
& $u=2'$ & \west{2'} &&
\\\hline
\end{tabular}
\end{center}
\vspace{0.5cm}

\caption{\label{fig:criticals} $F$-critical substrings and their transformations. Here $a(b)^*c$ means any string of the form $abb \dots bc$, including $ac$, $abc$, $abbc$, etc.}
\end{figure}

We define the {\bf final} $F$-critical substring $u$ of $w$ as follows: we take $u$ with the highest possible starting index, and take the longest in the case of a tie. If there is still a tie (from different representatives of $w$), we take any such $u$.

\begin{definition}[Unprimed operators] \label{def:F}
Let $w$ be a word and $v$ a representative containing the final $F$-critical substring $u$. We define $F(w)$ by transforming $u$ (in $v$) according to its type. If the type is 5F, or if $w$ has no $F$-critical substrings, then $F(w)$ is undefined; we write $F(w) = \varnothing$. The definition of $E$ is analogous, obtained from the definition of $F$ by exchanging $1 \leftrightarrow 2$, primed and unprimed letters, and $x \leftrightarrow y$.
\end{definition}

\begin{example} \label{exa:apply-F}
Let $w=1221'1'111'1'2'2222'2'11'1$.  There are four $F$-critical substrings:
\begin{itemize}
\setlength\itemsep{0em}
\item 
the substring $w_1 = 1$ is critical of type 3F (and 4F in a different representative),
\item the substring $w_1w_2 = 12'$ (in a different representative) is type 1F,
\item the substring $w_1 \cdots w_4 = 1221'$ is type 2F,
\item 
the substring $w_7 \cdots w_{10} = 11'1'2'$ is type 1F.
\end{itemize}
The last is final, so we use rule 1F, $11'1'2' \to 2'1'1'2$, to get $F(w)$. See Figure \ref{fig:F-on-walks}. 

\begin{figure}[t]
\begin{center}
\setlength{\unitlength}{2.5em}
\begin{picture}(5,2)(0,0)
\multiput(0,0)(0,0.2){12}{\line(0,1){0.1}}
\multiput(0,0)(0.2,0){27}{\line(1,0){0.1}}
\put(0,0){\circle*{0.13}}
\put(5,1){\circle{0.13}}
\stepeast{1}{%
\stepnorth{2}{%
\stepnorth{2}{%
\stepeast{1'}{%
\stepeast{1'}{%
\stepsouth{1}{%
\stepsouth{1}{%
\stepeast{1'}{%
\stepeast{1'}{%
\stepnorth{2'}{%
}}}}}}}}}}
\end{picture} \hspace{1cm}
\begin{picture}(4,2)(0,0)
\multiput(0,0)(0,0.2){12}{\line(0,1){0.1}}
\multiput(0,0)(0.2,0){22}{\line(1,0){0.1}}
\put(0,0){\circle*{0.13}}
\put(4,2){\circle{0.13}}
\stepeast{1}{%
\stepnorth{2}{%
\stepnorth{2}{%
\stepeast{1'}{%
\stepeast{1'}{%
\stepsouth{1}{%
\stepwestshiftN{2'}{%
\stepeast{1'}{%
\stepeast{1'}{%
\stepnorth{2}{%
}}}}}}}}}}
\end{picture}
\end{center}
\caption{\label{fig:F-on-walks} 
Computing $F(1221'1'111'1'2')$. Note that the endpoint shifts by $(-1,+1)$.}
\end{figure}
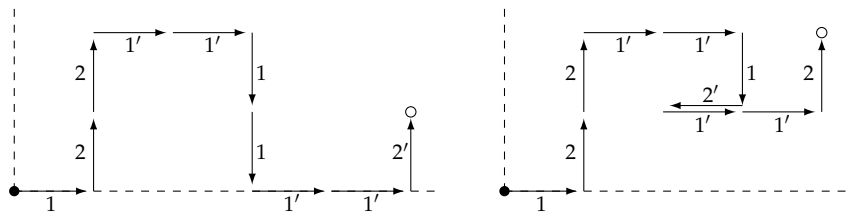
\end{example}


\begin{theorem}[Properties of $E,F$ and $E',F'$]
The operators $E,F$ have the following properties:
\begin{enumerate}
\setlength\itemsep{0em}
\item They are partial inverses, that is, $v = E(w)$ if and only if $F(v) = w$.
\item For tableaux, $E$ and $F$ preserve semistandardness and are coplactic for shifted jeu de taquin.
\item For tableaux of rectified shape, $E$ and $F$ agree with the definitions indicated in Figure \ref{fig:two-row}.
\end{enumerate}
The same properties hold for $E',F'$. 
\end{theorem}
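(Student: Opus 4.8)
The plan is to dispatch the primed operators first, where standardization does all the work, and then to treat the unprimed operators, whose coplacticity is the genuine difficulty.

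\emph{Primed operators.} Property~(1) is immediate from Definition~\ref{def:primed-operators} and Lemma~\ref{lem:unstandardize}: if $v=E'(w)$ then $\std(v)=\std(w)$ and $\wt(v)=\wt(w)+\alpha$, so $w$ is the unique word with standardization $\std(v)$ and weight $\wt(v)-\alpha$, whence $F'(v)=w$; the converse is symmetric. For property~(2) I would use two standard compatibilities: a shifted jeu de taquin slide preserves weight and commutes with standardization (this is exactly what the reverse-reading-order convention for primes in $\std$ is designed to ensure; cf.\ \cite{Sagan,Worley}). Granting these, if $T'$ is obtained from $T$ by a slide, then applying the same slide to $E'(T)$ produces a tableau of standardization $\std(T')$ and weight $\wt(T')+\alpha$, which by Lemma~\ref{lem:unstandardize} is $E'(T')$; hence $E'$ and $F'$ commute with slides, and their domains correspond under slides. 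Semistandardness is automatic, since $E'(T)$ shares the standardization of $T$ and a filling of a fixed shape is semistandard precisely when its standardization is standard. Property~(3) is a direct computation: on a straight shifted shape the reading word is rigid, and unstandardizing after a weight shift of $\pm\alpha$ reproduces the primed arrows of Figure~\ref{fig:two-row}.

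\emph{Unprimed operators, partial inverses.} Here I would argue directly from the five cases of Figure~\ref{fig:criticals}, checking that each $F$-transformation is undone by the corresponding $E$-transformation obtained under the symmetry $1\leftrightarrow2$, priming, $x\leftrightarrow y$ (for instance the type~1F move $1(1')^*2'\to 2'(1')^*2$ is reversed by applying $E$ to its image). The delicate point is selection: I must show that the \emph{final} $F$-critical substring of $w$ becomes the substring selected by $E$ on $F(w)$, so that the ``highest starting index, then longest'' rule is respected across the transformation and across representatives of the equivalence class. I would verify this on the lattice walk, using that the transformation alters the walk only in a bounded region near an axis, leaving critical substrings to its right untouched.

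\emph{Unprimed operators, coplacticity and straight shapes.} Property~(3) is again a (longer) direct computation on straight shifted shapes, and preservation of semistandardness can be folded into the case analysis of Figure~\ref{fig:criticals}. The crux, and the step I expect to be the main obstacle, is coplacticity: unlike the primed case, $F$ does not in general preserve $\std$, so no uniqueness shortcut is available, and the choice of the \emph{final} critical substring depends on global features of $w$ that need not obviously survive a slide. My primary plan is to reduce to commutation with a single elementary slide---since every slide factors into these---and then to verify that commutation on the lattice walk: the endpoint is a shifted-Knuth invariant (by the theorem computing $\rectify(w)$ from the walk endpoint), a slide perturbs the walk only locally, and one checks case by case that the final critical substring and its transformation commute with the perturbation. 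Conceptually this is underwritten by Haiman's shifted dual equivalence \cite{Haiman}, which guarantees that the straight-shape behaviour of~(3) extends to a \emph{unique} coplactic family; matching $E,F$ to that family after standardizing reduces the problem to standard shifted tableaux, where dual equivalence classes carry all the relevant information.
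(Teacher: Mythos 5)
Your treatment of the primed operators and of property~(1) for $E,F$ matches the paper's: the paper likewise dismisses $E',F'$ as immediate from the definitions, and for the partial-inverse claim it does exactly what you propose, namely tracking how the final $F$-critical substring and the tail of the lattice walk transform so that $E$ selects the image substring. Property~(3) is by inspection in both. The real divergence is in the coplacticity of the unprimed operators, which you correctly identify as the crux.

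There, the paper's route is: use shifted dual equivalence together with Haiman's \emph{mixed insertion} to prove that $E(w)$ and $F(w)$ are dual equivalent to $w$; coplacticity then follows in a fairly formal way, since dual equivalent words slide in lockstep. Your primary plan---factoring an arbitrary slide into elementary moves and checking commutation of the final-critical-substring selection against a local perturbation of the walk---is a genuinely different and much more exposed argument. Two concerns: first, a single jeu de taquin slide is an operation on tableaux, not a single shifted Knuth move on reading words, so reducing ``commutes with slides'' to ``commutes with Knuth moves'' requires an intermediate statement you have not supplied; second, the critical substrings of types 1F and 2F have unbounded length ($1(1')^*2'$, $1(2)^*1'$), so a move landing inside the selected substring is not a bounded local check. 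Your fallback via Haiman's uniqueness theorem has a circularity: to ``match $E,F$ to the unique coplactic family extending the straight-shape chains'' you must already know that $E,F$ are coplactic (or at least that they preserve dual equivalence), which is precisely what is to be proved. The concrete missing ingredient is the mixed-insertion argument establishing $E(w)\sim w$ in dual equivalence; with that lemma in hand your appeal to dual equivalence becomes the paper's proof.
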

\begin{proof}
The properties of $E',F'$ follow readily from the definitions; the bulk of the work concerns $E,F$.
For (1), we show that the final $F$-critical substring of $E(w)$ is the transformation of the final $E$-critical string of $w$, and that the `tail' of the lattice walk transforms in a simple way.
For (2), we use shifted dual equivalence and mixed insertion to show that $E(w)$ and $F(w)$ are dual equivalent to $w$. It is then fairly straightforward to show that the operators commute with jeu de taquin slides. Verifying (3) is by inspection.
\end{proof}






\subsubsection{Shifted tableau crystals}

  Let $\lambda/\mu$ be a shifted skew shape and let $\B = \ShST(\lambda/\mu,n)$ be the set of shifted semistandard tableaux on the alphabet $\{1'{<}1{<} 2'{<}2{<}\cdots{<}n'{<}n\}.$

\begin{theorem}\label{thm:main}
Let $F'_i,F_i$ ($i = 1, \ldots, n-1$) be the coplactic lowering operators on $\B$ defined in Definitions \ref{def:primed-operators} and \ref{def:F} respectively, with partial inverse (raising) operators $E_i', E_i$. 

\begin{itemize}
\setlength\itemsep{0em}
\item[(i)] The highest-weight elements of $\B$ (those for which $E_i(T) = E_i'(T) = \varnothing$ for all $i$) are precisely the type B Littlewood-Richardson tableaux.
\item[(ii)] 
Each connected component of the induced graph on $\B$ has a \emph{unique} highest-weight element.
\end{itemize}
\end{theorem}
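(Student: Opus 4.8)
The plan is to prove both parts by reducing, through coplacticity, to the explicit two-row picture of Figure \ref{fig:two-row}, and then to assemble the global statements using shifted dual equivalence.

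First I would isolate a two-letter statement that packages the local content of (i): for a word $w\in\{1',1,2',2\}^*$,
\[
E(w)=E'(w)=\varnothing \iff \text{the lattice walk of } w \text{ ends on the } x\text{-axis.}
\]
The main tool here is a weight--endpoint dictionary. Inspecting the six step rules (the four interior directions and the two on-axis directions) shows that every $1$- or $1'$-step changes $x-y$ by $+1$ and every $2$- or $2'$-step changes it by $-1$, so the endpoint satisfies $x_n-y_n=\wt_1(w)-\wt_2(w)$. Combining this with the shape theorem $x_n+y_n=\lambda_1-\lambda_2$ (which the operators preserve, being coplactic and hence fixing the rectified shape) gives $y_n=\tfrac12\big((\lambda_1-\lambda_2)-(\wt_1-\wt_2)\big)$; thus $y_n=0$ exactly when $\wt(w)$ is the maximal weight $(\lambda_1,\lambda_2)$ in its component. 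The direction ($\Leftarrow$) is then immediate: if $\wt(w)$ is maximal, any raising move ($E$ or $E'$) would produce a word whose walk ends at height $y_n-1=-1$, impossible for a first-quadrant walk, so both $E(w)$ and $E'(w)$ vanish.

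The direction ($\Rightarrow$), equivalently ``below maximal weight some raising operator applies,'' is the main obstacle: a priori a word could be highest weight for the unprimed $\mathfrak{sl}_2$-string and simultaneously for the primed one without being globally maximal. Here I would rectify. A two-letter word rectifies to a shape with at most two rows (only $\lambda_1,\lambda_2$ appear in the shape theorem), and since $E,F,E',F'$ commute with rectification it suffices to treat $\ShST(\nu,2)$ for a two-row straight shape $\nu$. There the last of the listed properties of the operators identifies the graph with the ``doubled string'' of Figure \ref{fig:two-row}; by inspection this graph is connected, has a unique source at the top (weight $(\nu_1,\nu_2)$, i.e.\ $y_n=0$), and every other vertex carries an incoming $F$- or $F'$-arrow. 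Hence $E(w)=E'(w)=\varnothing$ forces $w$ to the top, i.e.\ $y_n=0$. (A self-contained alternative would inspect the final $E$-critical substring directly when $y_n>0$, using the Remark that criticals occur only near the axes; I would fall back on this if the two-row reduction needs justifying in more detail.) With the lemma in hand, part (i) follows: the index-$i$ operators act only on the $\{i',i,(i{+}1)',i{+}1\}$-subword $w^{(i)}$, so $E_i(T)=E_i'(T)=\varnothing$ iff the $i$-th walk ends on the $x$-axis, and by Theorem \ref{thm:lattice-walk-main} this holds for all $i$ iff $T$ is ballot, i.e.\ a Littlewood--Richardson tableau. (As a check on the easy direction, on the superstandard tableau $U_\nu$, with row $j$ filled by $j$'s, the subword $w^{(i)}$ is $(i{+}1)^{\nu_{i+1}}\,i^{\nu_i}$, whose walk ends at $(\nu_i-\nu_{i+1},0)$; coplacticity then makes every LR tableau highest weight.)

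For part (ii) I would argue existence and uniqueness separately. Existence of a highest-weight element in each component is a finiteness argument: the statistic $\sum_j j\,\wt_j(T)$ strictly decreases (by $1$) under every raising operator $E_i,E_i'$ and is bounded below, so iterating raising operators from any $T$ terminates at a highest-weight element of its component. For uniqueness the key point is that all elements of one connected component are mutually shifted-dual-equivalent: since each operator is coplactic and preserves the shape $\lambda/\mu$, for every slide sequence $J$ we have $\mathrm{sh}(J(E_iT))=\mathrm{sh}(E_iJ(T))=\mathrm{sh}(J(T))$, so $E_i(T)\sim_d T$, and likewise for $E_i',F_i,F_i'$. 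Now if $T,T'$ are both highest weight in a component, then by part (i) both are LR, so their rectifications are superstandard tableaux $U_\nu$ and $U_{\nu'}$; dual equivalence forces $\nu=\nu'$ and hence $\rectify(T)=\rectify(T')=U_\nu$, and Haiman's theorem that rectification is injective on a single shifted-dual-equivalence class \cite{Haiman} gives $T=T'$. The only genuinely hard step in this whole scheme is the ($\Rightarrow$) half of the two-letter lemma, which is where I expect the explicit structure of Figure \ref{fig:two-row} to carry the weight.
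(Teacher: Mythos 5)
The paper gives no proof of Theorem \ref{thm:main} in this extended abstract (it is deferred to \cite{GLP2017}), so there is no argument to compare against line by line; your proposal, however, is correct and assembles exactly the ingredients the paper provides. Your weight--endpoint dictionary ($x_n-y_n=\wt_1(w)-\wt_2(w)$ together with $x_n+y_n=\lambda_1-\lambda_2$) correctly identifies ``both raising operators undefined'' with ``walk ends on the $x$-axis,'' so (i) reduces to Theorem \ref{thm:lattice-walk-main}; and for (ii), dual equivalence of all elements of a component plus Haiman's uniqueness theorem is surely the intended route --- note that the paper's own sketch of the coplacticity proof already records that $E(w)$ and $F(w)$ are dual equivalent to $w$, so you may cite that directly rather than re-deriving it from coplacticity. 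The only facts you borrow without proof --- the connected doubled-string structure of $\ShST(\nu,2)$ needed for the hard direction of your two-letter lemma (asserted in Figure \ref{fig:two-row} and property (3) of the operators), and the semistandard form of Haiman's theorem (which should be routed through standardizations via Lemma \ref{lem:unstandardize}, since Haiman's result concerns standard tableaux) --- are things the paper states or that follow routinely, so I see no genuine gap.
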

Summing the weights (weighting a vertex of weight $\gamma$ by $2^{\#\{i:\gamma_i> 0\}}x^{\mathrm{wt}(\gamma)}$) and breaking $\B$ into connected components recovers the skew type B Littlewood-Richardson rule,
\[\ShST(\lambda/\mu,n)\ \cong\ \bigsqcup_\nu \ShST(\nu,n)^{f_{\nu,\mu}^\lambda} \qquad \leadsto \qquad Q_{\lambda/\mu} = \sum_\nu f_{\nu,\mu}^\lambda Q_\nu.\]
where $f_{\nu,\mu}^\lambda$ is the coefficient of the Schur $Q$-function $Q_\nu$ in the expansion of the skew Schur $Q$-function $Q_{\lambda/\mu}$.
The crystal also gives an automatic proof of symmetry for $Q_\lambda$.

\begin{figure}[t]
\begin{center}
\includegraphics[height=9cm]{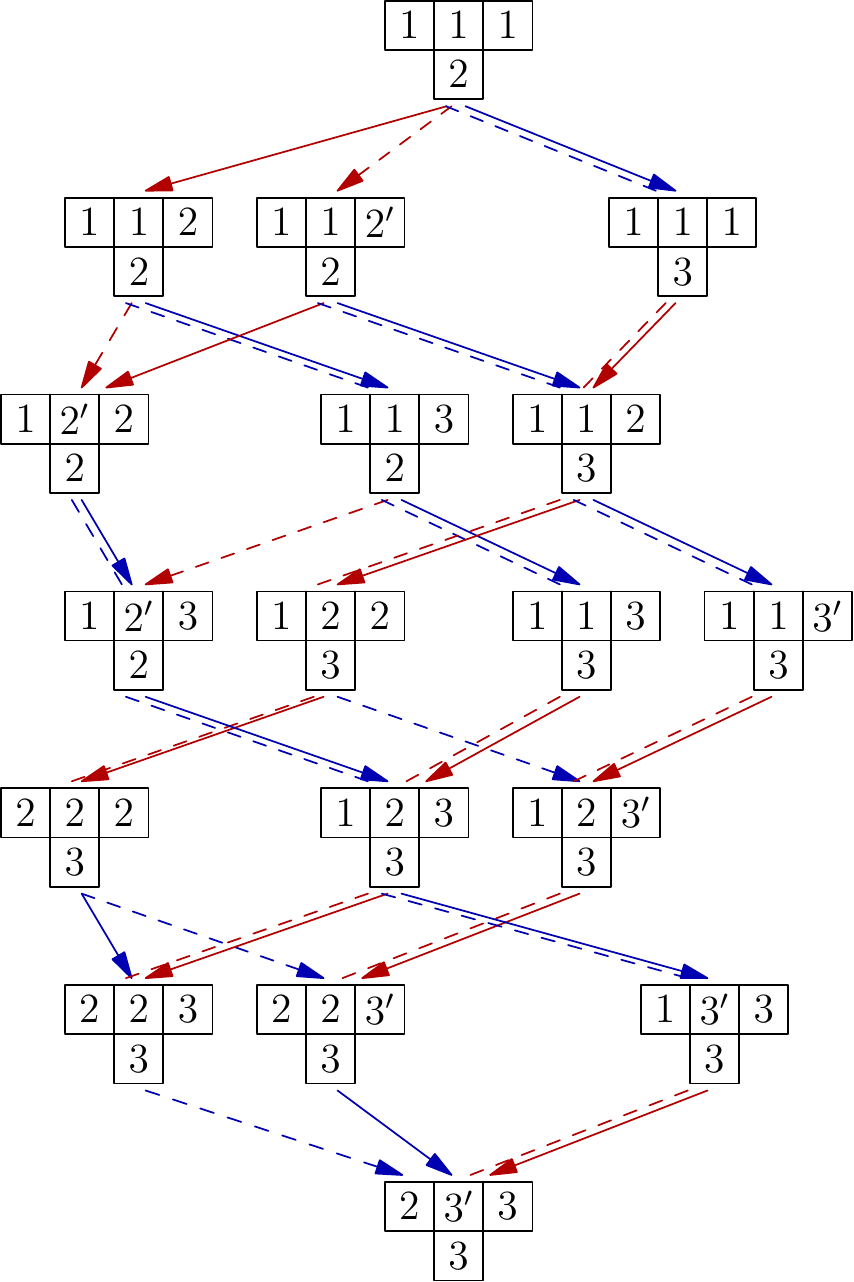} \hspace{1cm} \includegraphics[height=9cm]{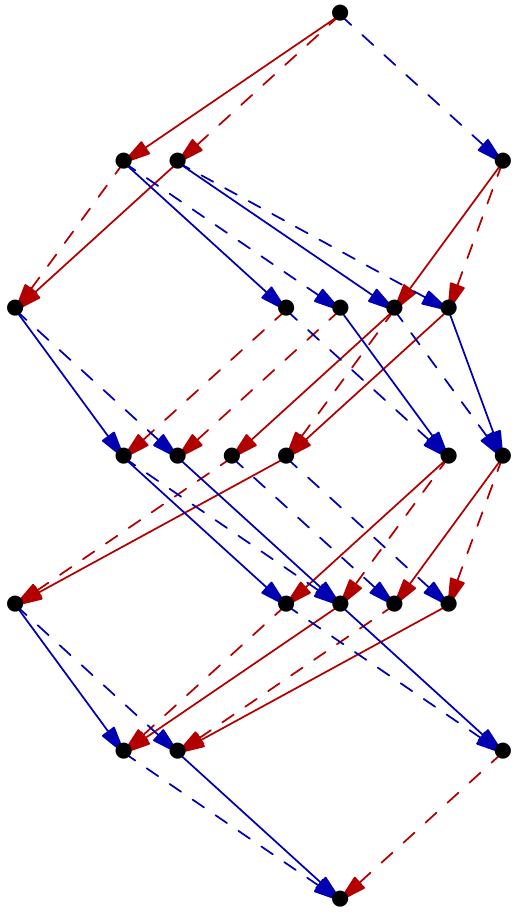}
\end{center}
\caption{\label{fig:crystal} The crystals for $\lambda = (3,1)$ and $(4,2,1)$. The $F_1', F_1$ arrows are, respectively, dashed red and solid red arrows (pointing leftwards), and $F_2', F_2$ are dashed blue and solid blue (pointing rightwards). Clustered vertices have the same weight.
}
\end{figure}

\section{Doubled crystal structure: local axioms}\label{sec:crystal}

Despite its type B enumerative properties, our crystal is not a type B crystal. Instead, it has a `doubled' type A structure, based on the separate actions of the primed and unprimed operators.

Recall that a (finite) {\bf Kashiwara crystal for $\mathrm{GL}_n$} is a set $\B$ together with partial operators $e_i,f_i$ on $\B$, length functions $\varepsilon_i,\varphi_i:\B\to \mathbb{Z}$ for $1 \leq i \leq n-1$, and weight function
$\mathrm{wt}:\B\to \mathbb{Z}^n$, such that:
\begin{enumerate}
\item[(K1)]  The operators $e_i, f_i$ are partial inverses, and if $Y = e_i(X)$, then
\[
(\varepsilon_i(Y), \varphi_i(Y))=(\varepsilon_i(X)-1, \varphi_i(X)+1) \ \ \text{ and } \ \
\mathrm{wt}(Y) = \mathrm{wt}(X)+\alpha_i,
\]
where $\alpha_i = (0,\ldots,1,-1,\ldots,0)$ is the weight vector with $1,-1$ in positions $i,i{+}1$.

\item[(K2)] For any $i\in \{1,\ldots,n-1\}$ and any $X\in \B$, we have $\varphi_i(X)=\langle \mathrm{wt}(X),\alpha_i\rangle+\varepsilon_i(X)$.
\end{enumerate}

Let $\B = \ShST(\lambda/\mu, n)$. We have two overlapping Kashiwara crystals on $\B$, given by our operators as follows.

\begin{definition}\label{def:crystal}
For $1 \leq i \leq n-1$ and $T \in \B$, let $(\varphi_i(T),\varepsilon_i(T)) := (x^{(i)},y^{(i)})$
be the endpoint of the $i,i+1$ lattice walk associated to $T$. Let $\mathrm{wt}(T)$ be its weight as a tableau . \end{definition}


\begin{theorem}\label{thm:main-doubled-typeA}
The operators $F_i,E_i,F'_i,E'_i$ commute whenever compositions are defined. Moreover, $F_i,E_i$ and $F'_i,E'_i$  independently satisfy the type A Kashiwara crystal axioms, using the same auxiliary functions $\varepsilon_i,\varphi_i,\wt$  on $\B = \ShST(\lambda/\mu,n)$. We call $\B$ a \defn{shifted tableau crystal}. Two examples are shown in Figure \ref{fig:crystal}.
\end{theorem}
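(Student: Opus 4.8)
The plan is to split the statement into two essentially independent claims: first, that the unprimed family $E_i,F_i$ and the primed family $E'_i,F'_i$ each satisfy (K1) and (K2) with the walk-endpoint functions $\varphi_i,\varepsilon_i$ of Definition \ref{def:crystal}; and second, that operators of different indices commute. The engine for the first claim is a pair of conservation laws for the $i,i{+}1$ lattice walk. Writing $(x,y)$ for its endpoint and $(n_1,n_2)$ for the numbers of $\{i,i'\}$- and $\{i{+}1,i{+}1'\}$-letters, I would first prove by induction on the walk length that
\[
x - y = n_1 - n_2 = \langle \wt, \alpha_i\rangle,
\]
checking that each step type (the interior $N,S,E,W$ moves and the two boundary rules) changes $x-y$ and $n_1-n_2$ by the same amount. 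This is exactly axiom (K2), since $\varphi_i=x$ and $\varepsilon_i=y$. The second conservation law is $x+y=\lambda_1-\lambda_2$ with $\lambda=\rectify(w)$, which is the content of the shape theorem proved earlier; crucially, $x+y$ is \emph{invariant} under all four operators, because each is coplactic and hence preserves the rectification shape (applying the operator to $\rectify(w)$ only rearranges entries within the fixed straight shape, as recorded in Figure \ref{fig:two-row}).

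With these two laws in hand, (K1) is almost immediate for both families. The partial-inverse property is furnished directly by part (1) of the Properties theorem and its primed analogue. That a raising operator increases $\wt$ by $\alpha_i$ is read off the definitions: $E_i$ (resp.\ $E'_i$) turns a single $\{i{+}1,i{+}1'\}$-letter into an $\{i,i'\}$-letter, by inspection of the transformation rules (resp.\ by Definition \ref{def:primed-operators}). Finally, the required endpoint move $(\varepsilon_i,\varphi_i)\mapsto(\varepsilon_i-1,\varphi_i+1)$ needs no case analysis: since $\wt$ jumps by $\alpha_i$, the quantity $x-y=n_1-n_2$ increases by $2$, while $x+y=\lambda_1-\lambda_2$ is unchanged, forcing $x\mapsto x+1$ and $y\mapsto y-1$. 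Thus both families obey (K1)--(K2) with the same $\varphi_i,\varepsilon_i,\wt$.

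For the commutation of different-index operators, the primed--primed case is handled uniformly by Lemma \ref{lem:unstandardize}: for $i\neq j$, both $E'_iE'_j(w)$ and $E'_jE'_i(w)$, when defined, have standardization $\std(w)$ and weight $\wt(w)+\alpha_i+\alpha_j$, so they coincide by uniqueness. When $|i-j|\ge 2$ every index-$i$ operator acts on a letter-alphabet disjoint from that of the index-$j$ operators, so they commute for trivial reasons. The remaining cases---adjacent indices $j=i\pm1$, and mixed pairs such as $E_i$ and $F'_j$ with $i\neq j$---I would reduce to straight shapes. Fix $w$ of skew shape $\theta$ and a rectifying sequence of jeu de taquin slides $\sigma$. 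Coplacticity gives $\sigma\circ G = G\circ\sigma$ for each operator $G$, and since all operators preserve shape, applying $\sigma$ to $E_iE_j(w)$ and to $E_jE_i(w)$ follows the \emph{same} shape history as for $w$ and lands on $\rectify(w)$. As $\sigma$ is a composition of reversible slides, any commutation established on the straight-shape tableau $\rectify(w)$ transfers back to $w$. On straight shapes the operators are given explicitly by Figure \ref{fig:two-row} and its higher-rank analogue, where the commutation can be checked.

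The main obstacle is exactly this last verification on straight shapes for adjacent indices and for mixed primed/unprimed pairs: there the two indices share the letters $\{i{+}1,i{+}1'\}$, so the critical-substring transformation of one index can alter the walk and the critical substrings of the other. Organizing this interference cleanly---ideally by routing straight-shape tableaux through mixed insertion, as in the proof of the Properties theorem, so that the commutation descends from the corresponding (known) statement for type A crystals---is where the real work lies. The two conservation laws and the coplactic reduction reduce everything else to bookkeeping.
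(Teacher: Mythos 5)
The paper itself does not prove Theorem \ref{thm:main-doubled-typeA} in this extended abstract (it defers to the full version), so I can only judge your argument on its merits against the tools the paper sets up. Your treatment of (K1)--(K2) is correct and clean: the step-by-step check that every letter changes $x-y$ and $n_i-n_{i+1}$ by the same amount gives (K2) immediately, and combining it with the invariance of $x+y=\lambda_1-\lambda_2$ (from the shape theorem plus coplacticity and shape-preservation of the operators) forces the endpoint to move by exactly $(+1,-1)$ under $E_i$, $E_i'$. That is exactly the kind of argument the walk was designed for.

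The commutation half, however, rests on a misreading of the statement, and this is a genuine gap. The claim ``$F_i,E_i,F'_i,E'_i$ commute whenever compositions are defined'' is about the four operators of a \emph{fixed} index $i$ --- it is the assertion that the primed and unprimed index-$i$ operators interleave to form the doubled strings of Figure \ref{fig:two-row} and axiom (A1), e.g.\ $F_iF_i'(w)=F_i'F_i(w)$ and $E_iF_i'(w)=F_i'E_i(w)$ whenever both sides are defined. It cannot be about distinct indices: axiom (A4.2) explicitly exhibits configurations with $f_1f_2(w)\neq f_2f_1(w)$ even though both compositions are defined, so the adjacent-index commutation you set out to verify on straight shapes is simply false, and no amount of coplactic reduction will establish it. Meanwhile the case that actually needs proving --- same index, primed against unprimed --- is untouched by your argument: Lemma \ref{lem:unstandardize} handles $E_i'E_j'$ because primed operators preserve the standardization, but $F_i$ and $E_i$ do not preserve standardization (rule 3F turns a $1$ into a $2$, reordering the standard word), so the uniqueness argument does not apply to mixed pairs. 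What is needed instead is an analysis of how the single letter moved by $F_i'$ interacts with the final $F$-critical substring of Definition \ref{def:F} (or a coplactic reduction to straight shapes followed by inspection of the doubled strings of Figure \ref{fig:two-row}, which is legitimate here precisely because the claim is a fixed-$i$ statement and each $\{i,i'\}$-component rectifies to a two-row shape).
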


We note that our crystals do not satisfy the Stembridge axioms \cite{Stembridge-typeA}, and therefore are not crystals in the sense of the representation theory of $U_q(\mathfrak{sl}_n)$. Instead, we show that they satisfy a `doubled' form of these axioms.

Explicitly, we show that the crystals $\B = \ShST(\lambda,n)$ are determined by their local combinatorial structure -- specifically the interactions between the $i,i',j,j'$ operators. The analogous statement for ordinary (non-shifted) tableaux is due to Stembridge \cite{Stembridge-typeA}. We show the following universality statement:



\begin{theorem}
\label{thm:uniqueness-main}
Let $G$ be a finite $\mathbb{Z}_{\geq 0}^n$-weighted directed graph with edges labeled $i',i$ for $i=1, \ldots, n-1$. Suppose $G$ satisfies the Kashiwara axioms (K1) and (K2) (with operators $e_i, f_i$ and $e_i', f_i'$, and length functions $\varepsilon_i,\varphi_i$, given by Definition \ref{def:stats}) in addition to the axioms (A1)-(A5) given below.

Then each connected component $C$ of $G$ has a unique highest-weight element $g^*$, with weight $\lambda = \mathrm{wt}(g^*)$ a strict partition, and canonically $C \cong \ShST(\lambda,n)$.
\end{theorem}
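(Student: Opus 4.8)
The plan is to adapt Stembridge's local characterization of type A crystals \cite{Stembridge-typeA} to the present doubled, shifted setting, using $\ShST(\lambda,n)$ as a model that every axiom-satisfying graph must locally reproduce.

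First I would record the elementary consequences of (K1) and (K2). Finiteness of $G$ together with the partial-inverse property guarantees that repeatedly applying the raising operators $e_i,e_i'$ to any vertex terminates at a \emph{highest-weight element} $g^*$ (one annihilated by all $e_i$ and $e_i'$); thus every connected component $C$ contains at least one. For such $g^*$ one has $\varepsilon_i(g^*)=0$, so (K2) gives $\varphi_i(g^*)=\langle\wt(g^*),\alpha_i\rangle=\wt(g^*)_i-\wt(g^*)_{i+1}\ge 0$, whence $\wt(g^*)$ is a partition. To promote this to a \emph{strict} partition I would invoke the local axioms encoding the shifted structure: an equality $\wt(g^*)_i=\wt(g^*)_{i+1}$ places the endpoint of the $i$-th lattice walk at the origin, where one of (A1)--(A5) keeps a primed raising operator available, contradicting that $g^*$ is highest weight.

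The core of the argument is a local-to-global reconstruction. I would first check that $\ShST(\lambda,n)$ itself satisfies (K1), (K2) and (A1)--(A5) --- this is Theorem \ref{thm:main-doubled-typeA} together with a direct verification of the local axioms from the lattice-walk descriptions of Definitions \ref{def:primed-operators} and \ref{def:F} --- and that, for $\lambda$ strict, it is connected with a unique highest-weight element $U_\lambda$ (Theorem \ref{thm:main}). Fixing a highest-weight $g^*\in C$ with $\wt(g^*)=\lambda$, I would define a map $\phi\colon\ShST(\lambda,n)\to C$ by sending $U_\lambda\mapsto g^*$ and declaring $\phi$ to intertwine the lowering operators, i.e.\ $\phi(f_I\,U_\lambda)=f_I\,g^*$ for every word $f_I$ in the $f_i,f_i'$; note that (K1) then forces $\phi$ to preserve $\varepsilon_i,\varphi_i$ as well, hence the entire defined/undefined pattern of edges. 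Defining $\psi\colon C\to\ShST(\lambda,n)$ by the symmetric recipe, one checks that $\psi\circ\phi$ fixes $U_\lambda$ and commutes with all lowering operators, so is the identity on the connected graph $\ShST(\lambda,n)$, and likewise $\phi\circ\psi=\identity$; thus $\phi$ is an isomorphism $C\cong\ShST(\lambda,n)$. Uniqueness of the highest-weight element of $C$ then follows from the corresponding uniqueness for the model.

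The crux --- and the step I expect to be the main obstacle --- is the \emph{well-definedness} of $\phi$ (equivalently $\psi$): that $f_I\,U_\lambda=f_J\,U_\lambda$ forces $f_I\,g^*=f_J\,g^*$. Following Stembridge, I would reduce this to a local confluence statement and prove it by induction on depth in $\ShST(\lambda,n)$: any two operator words reaching a common vertex can be connected by elementary moves that alter only a two-color portion of the path, and the axioms (A1)--(A5) are designed to close exactly these two-color diamonds. The difficulty is genuinely combinatorial: the doubled primed/unprimed structure doubles the number of edge colors and hence the case analysis, and one must simultaneously verify that (A1)--(A5) are strong enough to resolve every local diamond yet are all implied by the model $\ShST(\lambda,n)$, so that no spurious components can arise.
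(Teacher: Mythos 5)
Your proposal follows essentially the same route as the paper's (sketched) proof: verify the axioms hold for the model $\ShST(\lambda,n)$, establish a unique maximal element, and construct the isomorphism inductively from the top, with the real work lying in the two-color confluence analysis you correctly identify as the crux. The one structural difference is that the paper proves uniqueness of the maximal element of $C$ as a separate step \emph{before} building the isomorphism, which is the cleaner order --- as written, your surjectivity of $\phi$ (equivalently, that $\psi$ is defined on all of $C$) quietly presupposes that every vertex of $C$ descends from the single $g^*$, though this is repairable by observing that the image of $\phi$ is closed under all raising and lowering operators and hence equals the connected component $C$.
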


In particular, the generating function of $G$ (weighting a vertex $v$ by $2^{\#\{i:\mathrm{wt}(v)_i> 0\}}$) is Schur-$Q$-positive. Thus, any graph $G$ that is locally (shifted-)crystal-like is globally a shifted tableau crystal.

\label{sec:uniqueness}

We now state the axioms. Let $G$ be a graph as in Theorem \ref{thm:uniqueness-main}. 
Below, an omitted edge label is either unprimed (if the edge is solid) or primed (if the edge is dashed). Consider the following axioms on $G$:

\begin{itemize}
\item[(A1)] Each $\{i,i'\}$-connected component is a `doubled string':
\begin{center}
\raisebox{-.5\height}{\includegraphics[scale=1]{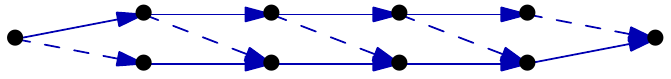}} \ or \ 
\raisebox{-.5\height}{\includegraphics[scale=1]{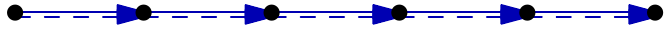}}
\end{center}
Moreover, $v$ is the top (resp. bottom) of a doubled string of the second kind if and only if $\mathrm{wt}_{i{+}1}(v) = 0$ (resp. $\mathrm{wt}_i(v) = 0$).  The smallest possible string of the first kind has two vertices, connected by only an $i'$ arrow.
\item[(A2)] If $|i-j| > 1$, all edges and reverse-edges commute. That is, given two edges $w \xrightarrow{a} x, w \xrightarrow{b} y$, there is a vertex $z$ with edges $x \xrightarrow{b} z, y \xrightarrow{a} z$, and conversely.
\end{itemize}

From (A1), we define $f_i(v)$ and $f_i'(v)$, for $v \in G$, by following the unique $i$ or $i'$ edge from $v$, if it exists, and $\varnothing$ otherwise.  We define $e_i, e_i'$ as the (partial) inverse operations. 
%
We also define statistics $\varepsilon_i, \varphi_i$ on $G$, and require that they satisfy (K1) and (K2) independently using both $e_i, f_i$ and $e_i', f_i'$: 

\begin{definition}\label{def:stats}
We let $\varepsilon_i(v)$ and $\varphi_i(v)$ be the total distance from $v$ to the top and bottom of its doubled string. We similarly define $\varepsilon_i'(v), \varphi_i'(v)$ (counting primed edges only) and $\widehat{\varepsilon}_i(v), \widehat{\varphi}_i(v)$ (counting unprimed edges only).
\end{definition}

\begin{itemize}


\item[(A3)]
Suppose $w\ {\xrightarrow{\ i{\pm}1 \text{ or } i{\pm}1' \ }}\ x$. Then $(\varepsilon_i(w) - \varepsilon_i(x),\varphi_i(w) - \varphi_i(x)) = (1,0) \text{ or } (0,-1).$ \\
That is, the $i$-string either shortens by 1 at the top, or lengthens by 1 at the bottom:
\begin{center}
\raisebox{-.5\height}{\includegraphics[height=1.5cm]{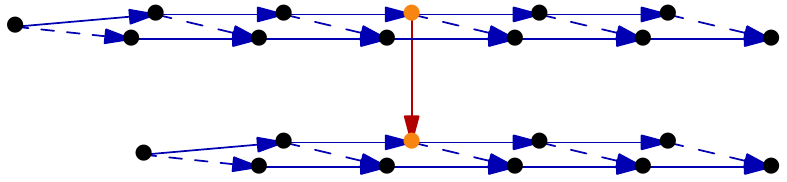}} \ or \ \ 
\raisebox{-.5\height}{\includegraphics[height=1.5cm]{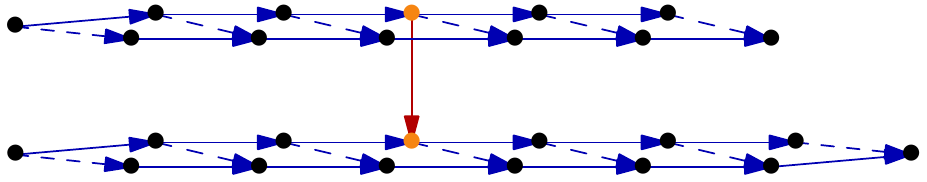}}
\end{center}
\end{itemize}

\noindent For the remaining axioms, we set $i=1$ and suppose there are edges $w \xrightarrow{1' \text{ or } 1} x$, $w \xrightarrow{2' \text{ or } 2} y$. We describe the relations satisfied by $f_1, f_1', f_2, f_2'$ in each case.  The relations for $\{i',i,i{+}1',i{+}1\}$ are the same, obtained by treating $i$ as 1 and $i{+}1$ as 2. Let \[\Delta = (\varepsilon_2(w) - \varepsilon_2(x), \varepsilon_1(w) - \varepsilon_1(y)) = (0,0), (1,0), (0,1) \text{ or } (1,1).\]

Below, we draw $f_1, f_1'$ edges pointing left and $f_2, f_2'$ edges pointing right.
\begin{itemize}
\item[(A4.1)] The following table describes the interactions for the pairs $\{f_1', f_2'\}$, $\{f_1, f_2'\}$ and $\{f_1', f_2\}$. For the $\{f_1', f_2\}$ pair, we additionally assume $f_2(w) \ne f_2'(w)$. In each case, the stated conditions hold if and only if the two top arrows merge as shown. 
\begin{center}
\begin{tabular}[c]{|@{}c@{}|@{}c@{}|c||@{}c@{}|@{}c@{}|c|} \hline
Pair & Conditions & Axiom & Pair & Conditions & Axiom \\ \hline
$\ \{f_1', f_2'\}\ $ & -- & \raisebox{-.5\height}{\includegraphics[scale=0.8]{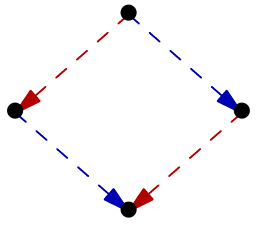}} &
$\ \{f_1', f_2\}\ $ & 
-- & \raisebox{-.5\height}{\includegraphics[scale=0.8]{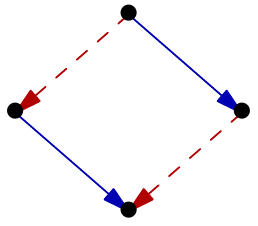}} \\ \hline
$\ \{f_1', f_2'\}\ $ &
\begin{tabular}[c]{c}
$\Delta = (0,0)$, \\
$\varphi_2(w) = 1$, \\
$\widehat{\varphi}_2(w) = 0$
\end{tabular}&
\raisebox{-.5\height}{\includegraphics[scale=0.8]{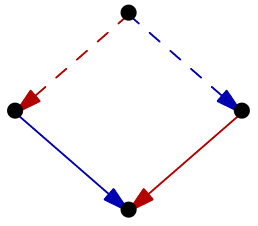}} &
$\ \{f_1, f_2'\}\ $ &
\ \ $\widehat{\epsilon}_1(w) > 0$ \ \ & \raisebox{-.5\height}{\includegraphics[scale=0.8]{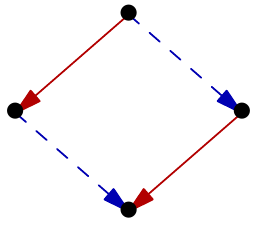}} \\ \hline
\end{tabular} \\
\end{center}

For example, the upper right axiom says that if $f_1'(w)$ and $f_2(w)$ are both defined and $f_2(w)\neq f_2'(w)$ (as assumed for this case), then $f_1'f_2(w)=f_2f_1'(w)$.  

The lower right says that if $f_1(w)$ and $f_2'(w)$ are both defined, then $f_1f_2'(w)=f_2'f_1(w)$ if and only if $\widehat{e}_1(w)>0$.

\item[(A4.2)] For the pair $\{f_1, f_2\}$, we assume $f_1'(w) = \varnothing$. Then the possible relations are:
\begin{center}
\begin{tabular}[c]{|c|c||@{}c@{}|c|} \hline
Conditions & Axiom & Conditions & Axiom \\ \hline
$\Delta = (0,1)$ or $(1,0)$
 & \raisebox{-.5\height}{\includegraphics[scale=0.8]{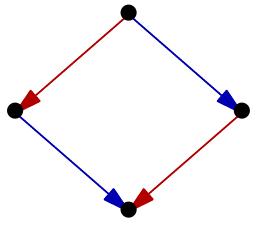}}  &
$\Delta = (1,1)$ & \raisebox{-.5\height}{\includegraphics[scale=0.8]{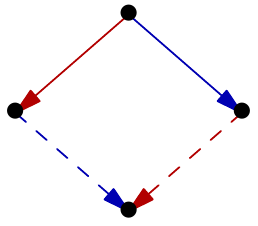}} 
\\ \hline
\begin{tabular}[c]{c}
$\Delta = (0,0)$, \\
$\widehat{\varphi}_1(y) \geq 2$
\end{tabular}&
\raisebox{-.5\height}{\includegraphics[scale=0.8]{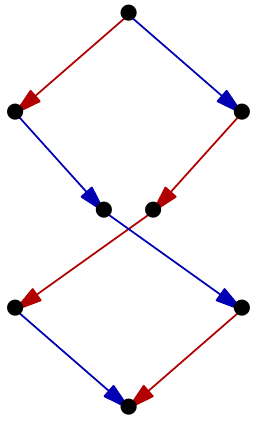}} &
\begin{tabular}[c]{c}
$\Delta = (0,0)$, \\
$\widehat{\epsilon}_1(w) \ne \widehat{\epsilon}_1(y)$
\end{tabular}& \raisebox{-.5\height}{\includegraphics[scale=0.8]{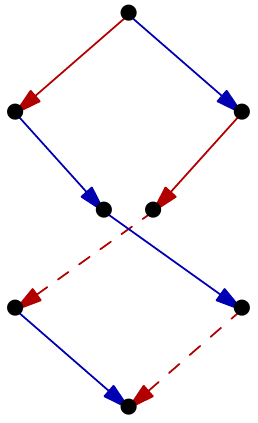}} \\
\hline
\end{tabular}
\end{center}

Note that if two vertices of the same weight are drawn as distinct above, the axiom asserts that they are not equal.  For instance, the lower left axiom says that if $f_1(w)$ and $f_2(w)$ are both defined and $f_1'(w)=\varnothing$, then $f_1f_2^2f_1(w)=f_2f_1^2f_2(w)$ and $f_1f_2(w)\neq f_2f_1(w)$ if and only if $\Delta=(0,0)$ and $\widehat{\varphi}_1(y)\ge 2$.

\item[(A5)] (Duality) We assume `dual' axioms to (A4.1) and (A4.2), reversing edge directions, the labels $1 \leftrightarrow 2$ (but not primed-ness of edges), and the statistics $\varepsilon, \varepsilon', \widehat{\varepsilon} \leftrightarrow \varphi, \varphi', \widehat{\varphi}$.
\end{itemize}

We highlight the similarity between our axioms and the type A Stembridge axioms, apart from the `doubling' effect of the dashed edges (i.e. primed operators).


\begin{proof}[Proof of Theorem \ref{thm:uniqueness-main}]
We show the axioms hold for $G = \ShST(\lambda,n)$ directly. For general $G$, we may assume $G$ is connected. We then show that $G$ has a unique maximal element. Finally, we construct the isomorphism inductively from the top.
\end{proof}

\printbibliography

\end{document}